\documentclass[12pt]{amsart}
\usepackage{amsfonts}

\usepackage[letterpaper]{geometry}

\usepackage{amssymb}
\setlength{\topmargin}{0pt}
\setlength{\headheight}{28pt}

\setlength{\oddsidemargin}{0pt}
\setlength{\evensidemargin}{0pt}
\setlength{\headsep}{18pt}

\def\Z{\mathbb{Z}}
\def\N{\mathbb{N}}

\def\bo{\{0,1\}^n}

\newtheorem{thm}{\bf Theorem}[section]
\newtheorem{lemma}[thm]{\bf Lemma}

\theoremstyle{definition}

\begin{document}

\title{Some remarks on the distribution of additive energy}

 \author[Norbert Hegyv\'ari]{Norbert Hegyv\'ari}
 \address{Norbert Hegyv\'{a}ri, ELTE TTK,
E\"otv\"os University, Institute of Mathematics, H-1117
P\'{a}zm\'{a}ny st. 1/c, Budapest, Hungary and associated member of Alfr\'ed R\'enyi Institute of Mathematics, Hungarian Academy of Science, H-1364 Budapest, P.O.Box 127.}
 \email{hegyvari@renyi.hu}

\begin{abstract}
The aim of this note is two-fold.  In the first part of the paper we are going to investigate an inverse problem related to additive energy. In the second, we investigate how dense a subset of a finite structure can be for a given additive energy. AMS 2010 Primary 11B75, Secondary 11B13

Keywords: Additive Combinatorics, additive energy
\end{abstract}

 \maketitle

\section{Introduction}

The additive energy is a central notion in additive combinatorics. This concept was introduced by Terence Tao and has been the subject of many works (See, e.g., \cite{S} and \cite{TV}.) For a set $A\subseteq \N$, the {\it additive energy} of $A$ is defined as the number of quadruples  $(a_1,a_2,a_3,a_4)$ for which $a_1+a_2=a_3+a_4$, formally $E(A):=|\{(a_1,a_2,a_3,a_4)\in A^4: \ a_1+a_2=a_3+a_4\}|$. The additive energy is similarly defined on an arbitrary structure $X$, where the addition is defined.

Let us remark that if $A$ is a finite subset of the integers then for every $x\in \Z$ $d_A(x):=|\{(a_1,a_2)\in A^2: \ 0\neq x=a_2-a_1 \}|=2d^+_A(x):=|\{(a_1,a_2)\in A^2:\ a_1< a_2; \ x=a_2-a_1; \}|$ (indeed for every $x$, $d_A(x)=d_A(-x)$). If from the content is clear, we leave the subscript and we write simply $d(x)$ or $d^+(x)$. Furthermore (perhaps this is a folklore) $\max_{A\in \N; \ |A|=n}E(A)=n^2+\frac{(n-1)n(2n-1)}{3}=(1+o(1))\frac{2}{3}n^3$. Indeed for every $x$ where $d(x)>0$ let $i$ be the maximal index for which $x=a_{i+1}-a_j; \ j\leq i$. Then $d(x)\leq i$. This attends when $A$ is an arithmetic progression with length $n$. A simply calculation shows the bound above.

Clearly $|A|^2\ll E(A)\ll |A|^3$ holds, since the quadruple $(a_1,a_2,a_1,a_2)$ is always a solution and given $a_1,a_2,a_3$ the term $a_4$ is uniquely determined by them. (We will use the notation $|X|\ll |Y|$ to denote the estimate $|X|\leq C|Y|$ for some absolute constant $C>0$). For every $M\geq 2$ we write $[M]:=\{1,2,\dots,M\}$.

The aim of this note is two-fold.  In the first part of the paper we are going to investigate an inverse problem related to additive energy. In the second part, we investigate how dense a subset of a finite structure can be for a given additive energy.

\section{An inverse problem}

There are various type of inverse problems. Maybe the best known is the celebrated Freiman-Ruzsa result which describes the structure of sets with small doubling $A+A; \ A\subseteq \N$ (see e.g. \cite{TV}). 

An other problem which is due to S. Burr asked which property of an infinite sequence $B$ ensures that $\N\setminus B$ can be written as a subset sum of an admissible sequence $A$, i.e. $\N\setminus B=P(A)=\{\sum_{a\in A'}a: A'\subseteq A; \ |A|<\infty\}$. This issue has a relatively large literature too (see. e.g. \cite{B}, \cite{CH}). We mention two other inverse problems which relate to the question of the present section. The first is originated from the folklore; it is known that for every finite set of integers $A$, $2|A|-1\leq |A+A|\leq {|A|+1 \choose 2}$. Is it true that for every $n,k\in \N$, $k\in [2n-1, {n+1 \choose 2}]$ there is a set $A\in \N$, $|A|=n$ and $|A+A|=k$? For this (undergraduate) question the answer is yes.

The second question is due to Erd\H os and Szemer\'edi (see in \cite{ESZ}): It is easy to see that if $A\subseteq \N$ then for the cardinality of the sums of the subset we have ${|A|+1\choose 2}\leq |P(A)|\leq 2^{|A|}$. They asked: is it true that for every $t$, ${n+1\choose 2}\leq t\leq 2^{n}$ there is a set of integers $A$ with $|A|=n$ and $|P(A)|=t$? In [H96] I gave an affirmative answer.

A similar question on additive energy would be the following. Write $Set_G(n):=\{E(A): \ A\subseteq G; \ |A|=n \}$, where $G$ is any additive structure.

While in the previous examples the possible values of $A+A$ and $P(A)$ were intervals one can guess that the set $Set_G(n)$ is not one. For example let $G=\bo$ and $A=\{0,1\}^k\subseteq \bo$. Then it is easy to see, that the value of $E(A)$ is $6^k$.

Note that in \cite{KT} the authors have shown that if $A\subseteq \bo$ then $E(A)\leq |A|^\varrho$, where $\varrho=\log_26$, and the exponent cannot be replaced by any smaller quantity.  

\subsection{Integer case} First we are going to investigate the case, when $G=\Z$.

\begin{thm}\label{2.1}
Let $\lfloor \frac{n}{3}\rfloor =k$,  $n\equiv r \pmod 4$. Let $$\mathcal{I}:=[2n^2-n+66, k^2+\frac{(k-1)k(2k-1)}{3}-66].$$
Then $\mathcal{I}\cap Set_\Z(n)$ is an arithmetic progression in the form $\{4k+r\}$.
Moreover $\mathcal{J}:=Set_\Z(n)\cap (k^2+\frac{(k-1)k(2k-1)}{3}, \frac{n(n+1)2n+1)}{3}]$ contains $\Omega (n^2)$ elements.
\end{thm}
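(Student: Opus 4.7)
My plan is to attack the theorem in three stages: a modular constraint restricting $\mathcal{I}\cap Set_\Z(n)$ to one residue class mod $4$; a matching construction realizing every admissible value in $\mathcal{I}$; and a separate construction for the lower bound on $\mathcal{J}$.

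\textbf{Step 1 (modular invariant).} I would first prove $E(A)\equiv 2n^2-n\equiv n\pmod 4$ for every $n$-element $A\subseteq\Z$; together with $n\equiv r\pmod 4$ this forces $\mathcal{I}\cap Set_\Z(n)\subseteq\{e\in\mathcal{I}:e\equiv r\pmod 4\}$. The group $(\Z/2)^3$ acts on quadruples $(a_1,a_2,a_3,a_4)\in A^4$ satisfying $a_1+a_2=a_3+a_4$ by swapping within each pair and swapping the two pairs. Trivial quadruples (those with $\{a_1,a_2\}=\{a_3,a_4\}$ as multisets) contribute exactly $2n^2-n$; every non-trivial orbit has size $4$ (when one side is a repeated term, i.e.\ an equation $2a=c+d$ with $c\ne d$ and $a\notin\{c,d\}$) or $8$ (when all four entries are distinct), so the non-trivial total is divisible by $4$.

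\textbf{Step 2 (realization).} The base construction is
$$A_\ell:=[1,\ell]\cup S_\ell,\qquad 0\le\ell\le L,$$
where $S_\ell\subseteq\N$ is a Sidon set of cardinality $n-\ell$ placed so far to the right that the three sumsets $[1,\ell]+[1,\ell]$, $[1,\ell]+S_\ell$ and $S_\ell+S_\ell$ are pairwise disjoint, and $L\le k$ is chosen so that $f(L)\le k^2+\frac{(k-1)k(2k-1)}{3}-66$. A direct count then yields the closed form
$$E(A_\ell)=f(\ell)=\frac{2\ell(\ell-1)(\ell-2)}{3}+2n^2-n,$$
with $f(\ell+1)-f(\ell)=2\ell(\ell-1)$, so the anchor values $\{f(\ell)\}$ sit in $\mathcal{I}$ at the correct residue mod $4$ but leave gaps growing quadratically. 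To interpolate, I would use two local moves whose effect on $E$ is predictable. Move (a): delete an interior point $i\in[2,\ell-1]$ of $[1,\ell]$ and reinsert it as a fresh far Sidon element, which drops $E$ by an amount depending in a concave quadratic way on $i$. Move (b): shift one element of $S_\ell$ to the slot $a+b-c$ for a chosen triple $a,b,c\in[1,\ell]$ with $a+b-c\notin A_\ell$, picked so that no secondary coincidence is created; this forces \emph{exactly} one new non-trivial orbit, of size $4$ (so $\Delta E=4$) when $a=b$ and of size $8$ (so $\Delta E=8$) otherwise. Starting from the largest anchor $f(\ell)\le e$ and iterating move (b) in the correct $(4,8)$-proportion lands on any target $e\in\mathcal{I}$ with $e\equiv r\pmod 4$; for residues sitting just above an anchor where move (b) cannot be applied in isolation (because every candidate slot overlaps $A_\ell$), I would instead approach from the next anchor by descending with move (a). The $\pm 66$ slack is precisely what is needed to absorb these finitely many corner configurations.

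\textbf{Step 3 ($\mathcal{J}$) and main obstacle.} For $|\mathcal{J}|=\Omega(n^2)$ I would reuse $A_\ell$ with $\ell\in\{k+1,\dots,n\}$: the anchors alone contribute $n-k=\Omega(n)$ distinct energies all exceeding $k^2+\frac{(k-1)k(2k-1)}{3}$, and for each such $\ell$, move (a) applied at the various $i\in[2,\lceil(\ell+1)/2\rceil]$ produces $\Theta(\ell)=\Omega(n)$ further distinct values (the drop in $E$ is a strictly concave quadratic in $i$, symmetric about $(\ell+1)/2$); distinctness across different $\ell$ follows by comparing leading terms in $\ell$, giving $\Omega(n^2)$ in total. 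The main obstacle is the fine control in Step~2: verifying that each application of move (b) introduces \emph{exactly} one new non-trivial orbit (never two or three by accident) and that the anchors together with the allowed local moves truly reach every residue class $\equiv r\pmod 4$ throughout the interior of $\mathcal{I}$. The constants $66$ at the endpoints provide exactly the slack needed to avoid the finitely many configurations where a single move would inadvertently create a second collision.
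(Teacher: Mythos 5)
Your overall architecture matches the paper's: a mod-$4$ invariant (your Step 1 via the $(\Z/2)^3$-action on nontrivial quadruples is a cleaner double-counting route than the paper's induction on the largest element, and it correctly yields $E(A)\equiv 2n^2-n\equiv n\pmod 4$), a family of anchor sets whose energies march through the interval with controlled gaps, and a local surgery that changes $E$ by exactly $4$ to fill those gaps. Steps 1 and 3 are essentially sound (Step 3 still owes a verification that the values produced by move (a) for different $\ell$ do not collide, but this is the same level of rigor as the paper's own $\Omega(n^2)$ count).

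The genuine gap is move (b), which is the load-bearing step. You relocate an element of $S_\ell$ to the slot $v=a+b-c$ with $a,b,c\in[1,\ell]$, so $v\in[2-\ell,\,2\ell-1]$: the new element lands inside or within distance $\ell$ of the interval $[1,\ell]$. Then every sum $v+i$ with $i\in[1,\ell]$ falls in (or near) $[2,2\ell]=[1,\ell]+[1,\ell]$, where the representation function is as large as $\ell/2$, so adding $v$ creates $\Theta(\ell^2)$ new nontrivial quadruples, not one orbit; the energy jumps by $\Theta(\ell^2)$ rather than by $4$ or $8$, and the whole interpolation collapses. The claim that ``no secondary coincidence is created'' cannot be arranged for any choice of $a,b,c\in[1,\ell]$, and the $\pm 66$ slack does not help because the failure is not a finite list of corner cases but generic. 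The repair is exactly what the paper does: perform the $+4$ surgery entirely among well-separated elements. In the paper's lemma the far part of the set is $10$-lacunary, and replacing $x_{3k}$ by $x_{3k}'=2x_{3k-1}-x_{3k-2}$ keeps the new element isolated from everything except the single intended relation $x_{3k}'+x_{3k-2}=2x_{3k-1}$, giving $\Delta E=4$ exactly. To salvage your scheme you would need to take $S_\ell$ lacunary (not merely Sidon and ``far to the right'') and choose the triple $a,b,c$ from $S_\ell$ itself, then verify that $a+b-c$ creates no accidental relation with the rest of $S_\ell$ or with $[1,\ell]$; Sidon-ness alone does not guarantee this.
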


\begin{proof}

Let us start by some easy observations. Let us note that the additive energy is invariant to the affine transformation, i.e. for every finite set $B$ and integers $a\neq 0, b$ $E(B)=E(aB+b)$.

It is not too hard to show that the parity of $A\subseteq \N$ and $E(A)$ is the same. In fact we prove that for any set $A\subseteq \N$, $|A|\geq 3$, $|A|\equiv E(A)\pmod 4$. Indeed, $E(A)=\sum_x d_A^2(x)=d^2_A(0)+2\sum_{x\neq 0} d_A^{+2}(x)=|A|^2+2\sum_{x\neq 0} d_A^{+2}(x)$. It is easy to check that for every $|A|=3$, we have $E(A)\equiv 3\pmod 4$. Now let $n\geq 3$, and consider $|A'|=n+1$ with biggest element $a_{n+1}$. Denote by $A=A'\setminus a_{n+1}=\{a_1<a_2<\cdots <a_n\}$ and write $x_j=a_{n+1}-a_j; \ j=1,2,\dots, n$. Write  $E(A')=\sum_xd_{A'}^2(x)=d_{A'}^2(0)+2\sum_xd_{A'}^{+2}(x)$. $d_{A'}(0)=n+1$  so $d^2_{A'}(0)-d^2_{A}(0)=2n+1$. Let $d^+_A(x_j)=t_j\geq 0$. Then  $ |d_{A'}^{+2}(x_j)- d_A^{+2}(x_j)|=2t_j+1$. So $E(A')-E(A)=2n+1+2\sum_{j_1}^n(2t_j+1)=4n+4\sum_{j_1}^nt_j+1$. Hence if $|A|\equiv E(A)\equiv r \pmod 4$, then $|A'|\equiv E(A')\equiv r+1 \pmod 4$ as we wanted.

In the first stage, we move downwards from the maximum energy value. So let $A_0=\{a_i=i; \ i=1,2,\dots n\}$. As we mentioned $E(A_0)=(1+o(1))\frac{2}{3}n^3$. The maximal difference is $n-1$, the minimal is $0$, hence we can write $E(A_0)=|A_0|^2+2\sum_{x=1}^{n-1}d^{+2}_{A_0}(x)$. 

For $1\leq k\leq n-2$ we are going to define  the set $A_0^{(k)}$ as follows: for $i=1,2,\dots n-1$ let $a_i=i$ and let $a_n=n+k$. Write shortly $d_k(x)=d^+_{A_{n,k}}(x)$ and $d_0(x)=d^+_{A_0}(x)$. For $1\leq x\leq k$, $d_k(x)=d_0(x)-1$ since $a_n-a_i\geq k+1$ for $i=1,2\dots n-1$. For $k<x\leq n-1$ $d_k(x)=d_0(x)$ and when $n\leq x\leq n+k-1$ then $d_k(x)=1$ since difference bigger than $n-1$ does not occur in $A_0-A_0$.

So we have
$$
E(A_0)-E(A_{n,k})=2\sum_{x=1}^{n-1}d_0^2(x)-2\sum_{x=1}^{n+k-1}d_k^2(x)=
$$
$$
=2\sum_{x=1}^{n-1}d_0^2(x)-2\Big[\sum_{x=1}^k(d_0(x)-1)^2+\sum_{x=k+1}^{n-1}d_0^2(x)+\sum_{x=n}^{n+k-1} 1\Big]=
$$
$$
=2\sum_{x=1}^k(2d_0(x)-1)-2k=4\sum_{x=1}^k(n-x)-4k=4nk-2k^2-6k.
$$
Finally let $A_1:=\{1,2,\dots, n-1, 10^n\}$. Since every $i$, $1\leq i\leq n-1$, $d(a_n-a_i)$ remains $1$ we have that for every $1\leq k\leq n-2$ the gap between two consecutive values of energies is 
$$
E(A_0^{(k+1)})-E(A_0^{(k)})=(E(A_0)-E(A_0^{(k)}))-(E(A_0)-E(A_0^{(k+1)}))=4n-4k-8.
$$
Continue the previous process to obtain the strictly decreasing sequence  of energy values $\{E(A_1^{(k)}); \ k=1,2 \dots, n-3\}$ and generally for $j=1,2,\dots, m$ the sequence $\{E(A_j^{(k)}); \ k=1,2 \dots, n-j-2\}$, where $m$ will be determine later.
 
So the end of the $m^{th}$ step we have the set $A_{m+1}=\{1,2,\dots, n-m, 10^m,10^{m+1},\dots, 10^n\}$ and similarly, as we have seen in the previous process for $k=1,2,\dots, n-m-2$ we obtain
$$
E(A_{m-1}^{(k+1)})-E(A_{m-1}^{(k)})<4(n-m-k)-7.
$$
(note that the elements $10^m,\dots, 10^n$ do not play a role in the change of energy). 

The argument of this stage shows that  $\mathcal{J}$ contains $\Omega (n^2)$ elements.

In the second stage, we move upwards from this given energy value from appropriate $m$.

\begin{lemma}\label{3.2}
Let $X_0=\{x_i\}^m_{i=1}$ be an $m$ element $10$ lacunary sequence of integers, i.e. for $i=1,2,\dots m-1$, $\frac{x_{i+1}}{x_i}\geq 10$. For $k=1,2, \dots, \lfloor m/3 \rfloor$ let $X_k=(X_0\setminus \{x_{3i}\}^{k}_{i=1})\cup  \{x'_{3i}\}^{k}_{i=1}$, where $x'_{3i}=2x_{3i-1}-x_{3i-2}$. We have $E(X_k)-E(X_{k-1})=4$.
\end{lemma}
\begin{proof}
Write briefly $d^+_{X_k}=d^+$. Since $X$ is a $10$ lacunary sequence thus $d^+(x_{3k}-x_{3k-1})=1$. Let us replace $x_k$ by $x'_{3k}=2x_{3k-1}-x_{3k-2}$. Then the difference $x_{3k-1}-x_{3k}$ occurs twice instead of one, the differences $x_{3k}-x_{3k-1}$ and $x_{3k}-x_{3k-2}$ do not occur. The new difference will be $x'_{3k}-x_{3k-2}$ with $d(x'_{3k}-x_{3k-2})=1$. (The values of the other representation functions do not change, just the length of the differences).

So we have $E(X_k)-E(X_{k-1})=2(d(x'_{3k}-x_{3k-2})^2+d^{+2}(x'_{3k}-x_{3k-1})-d^{+2}(x_{3k-1}-x_{3k-2})-d^{+2}(x_{3k}-x_{3k-1}))=2(1^2+2^2-1^2-1^2-1^2)=4$.
\end{proof}
Now if $m>\frac{3n}{4}$ than we have $4n-4m-7<\frac{4m}{3}-1<4\lfloor \frac{m}{3}\rfloor$. By Lemma \ref{3.2}  we can fill the gaps in $E(A_{m-1}^{(k+1)})-E(A_{m-1}^{(k)})$ by sequences with difference $4$ in the interval $[2n^2-n, \frac{k(k+1)2k+1)}{3}]$.
\end{proof}

\medskip

\section{Case $ A_1\times A_2\times \cdots \times A_n\in G=[M]^n$}

In this section, we address a similar issue to the \ref{2.1} theorem, as well as a density vs. energy result related to one of Kane and Tao's results (see [KT]).

Before our results, we will formulate an argument that we will use in the rest of the paper.
\subsection{On density and additive energy of $A_1\times A_2\times \cdots \times A_n$; $A_i \subseteq G; \ i=1,2,\dots n$.}

Let $G$ be any finite semigroup with $[G|=M$. Throughout the rest of the paper we will use the following argument:

If $a\in G^r$ and $b\in G^t$ are two elements, then  $ab$ means  that $a$ and $b$ are literally contiguous (i.e. the two strings are concatenated), and $ab\in G^{r+t}$.

Now let us assume that the sets $U\subseteq G^t:=G_1$ and $V\subseteq G^r:=G_2$ have been defined  with $|U|=M^{c_1t}$ and $|V|=M^{c_2r}$. Let $W:=\{uv: u\in U, \ v\in V\}\subseteq G^{r+t}$. Clearly $|W|=|U||V|=M^{c_1t+c_2r}$. 

Now we are going to show that if $E(U)=|U|^{td_1}$ and $E(V)=|V|^{rd_2}$ then $E(W)=M^{td_1+rd_2}$. Let us assume that $(a_n,b_n,c_n,d_n)\in U^4$ and $(a'_n,b'_n,c'_n,d'_n)\in V^4$ two four-tuples for which $a_n+b_n=c_n+d_n$ and $a'_n+b'_n=c'_n+d'_n$ hold. Then clearly $(a_na'_n)+(b_nb'_n)=(c_nc'_n)+(d_nd'_n)$ also holds. Conversely assume that $(a_na''_n)+(b_nb''_n)=(c_nc''_n)+(d_nd''_n)$ holds, for some four-tuple $(a_na''_n),(b_nb''_n),(c_nc''_n),(d_nd''_n)\in W^4$. By the definition it implies that $a_n+b_n=c_n+d_n$ and $a''_n+b''_n=c''_n+d''_n$. 
Hence $E(W)=E(U)E(V)=M^{td_1+rd_2}$.

One can see by induction that for every $z$, if  $U_i\subseteq G^{r_i}$, $i=1,2,\dots z$ and $W=\{u_1u_2\dots u_z: \ u_i\in U_i \ i=1,2,\dots, z\}$ then $|W|=\prod^z_{i=1}|U_i|$ and $E(W)=\prod^z_{i=1}E(U_i)$.

\medskip

 \subsection{Inverse question in $[M]^n$}

 In this section we will investigate sets in the form $X=A_1\times A_2\times \dots, \times A_n\subseteq [M]^n$, $|A_1|=|A_2|=\dots =|A_n|=w\leq M$, $1\leq |X|=w^n$. Throughout this section write $Set_{[M]^n}(w)=\{E(X_j):  |X_j|=w; X_j\subseteq [M]^n\}$, in increasing order. Write shortly $E_j:=E(X_j)$. One can guess that $E_{j+1}-E_j$ is not bounded. The following example supports this view: Let $G:=\{0,1,2\}^n$ and let $X\subseteq G$ with $|X|=3^m\cdot 2^k$; $2\leq k+m\leq n$. It implies that there are subscripts $i_1,i_2,\dots, i_k$ and $j_1,j_2,\dots, j_m$, for which $|A_{j_1}|=\dots =|A_{j_m}|=3; \ |A_{i_1}|=\dots =|A_{i_k}|=2$ and the cardinality of the rest (if they exist) is $1$. A simple calculation shows that if $Y\subseteq \Z$, $|Y|=3$ then $E(Y)=15$ or $E(Y)=19$. If $|T|=2$ then $E(T)=6$. So by the argument of the previous section we have that for every $|X|=3^m\cdot 2^k$; $2\leq k+m\leq n$,  $E(X)=15^r19^s6^k$; $r+s=m$. Hence for every $j$ $E_{j+1}-E_j$ is at least $6^k$.

 So instead of $E_{j+1}-E_j$ it is reasonable to investigate $E_{j+1}/E_j$.

\begin{thm}\label{3.2} 
There are $N:=c_1M^3$ many $X_i$ in the form $X_i=A_{i,1}\times A_{i,2}\times \dots, \times A_{i,n}\subseteq [M]^n$ for which every $1\leq i\leq N; \ 1\leq j\leq n$ we have $|A_{i,j}|=w$ and $|X_1|=|X_2|=\dots =|X_N|<M^n$, and  such that for every $1\leq i\leq N$, we have
$$
E(X_i)/E(X_{i+1})\leq \Big(1+\frac{c_2}{w^3}\Big).
$$
\end{thm}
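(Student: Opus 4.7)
My plan is to use the product-energy identity $E(U \times V) = E(U)\, E(V)$ from Subsection~3.1 to reduce the problem to one dimension, then apply the perturbation family from the proof of Theorem~\ref{2.1}. Fix a base $w$-set $B \subseteq [M]$ and take $\mathcal{F} := \{A_k := \{1, 2, \ldots, w-1, w+k\} : k = 0, 1, \ldots, w - 2\}$, which by the calculation in the proof of Theorem~\ref{2.1} satisfies $E(A_k) = E(A_0) - (4wk - 2k^2 - 6k)$ with $E(A_0) = (1+o(1))\tfrac{2}{3}w^3$, yielding $|\mathcal{F}| = w - 1$ distinct factor energies. I consider the products
$$X_{\vec k} \;:=\; A_{k_1} \times A_{k_2} \times A_{k_3} \times B^{n-3}, \qquad \vec k \in \mathcal{F}^3.$$
For $M = \Theta(w)$ this already produces $\sim w^3 \sim c_1 M^3$ products; for larger $M$ I would pad the count up to $c_1 M^3$ by tensoring in shifts of $B$ in the remaining coordinates, which preserves the energy. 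Every $X_{\vec k}$ has size $w^n < M^n$, and $E(X_{\vec k}) = \prod_i E(A_{k_i}) \cdot E(B)^{n-3}$.

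The key technical task is to bound the sorted consecutive ratios of these energies by $1 + c_2/w^3$. A single-coordinate increment yields $E(A_{k_i})/E(A_{k_i+1}) = 1 + (4w - 4k_i - 8)/E(A_{k_i+1}) = 1 + O(1/w^2)$, which is too weak by itself. The three-factor product provides a much finer grid: Taylor expanding
$$\log E(A_k) \;=\; \log E(A_0) - \frac{4wk - 2k^2}{E(A_0)} + O(w^{-3}),$$
the log-energies of the $X_{\vec k}$ form a three-fold sumset of a near-quadratic sequence of length $\Theta(w)$, spanning a range of width $O(1/w^2)$ and containing on the order of $w^3$ distinct values, so the average log-gap is $O(1/w^5) \ll 1/w^3$.

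The main obstacle is upgrading this average-gap bound to a \emph{maximum}-gap bound. I would fiber $\mathcal{F}^3$ over level sets of $s_1 = k_1 + k_2 + k_3$ (the leading linear term) and use the quadratic correction $k_1^2 + k_2^2 + k_3^2$ to show that products cluster at scale $O(1/w^3)$ within each fiber, while dispersion across different values of $s_1$ fills the entire range at the same scale. A cleaner backup, valid whenever $M$ admits a $10$-lacunary subset of size $w$, is to replace one factor with a lacunary perturbation in the spirit of Lemma~3.2; this gives energy differences of exactly $4$, which at the scale $E(A_k) \sim \tfrac{2}{3}w^3$ translates via the product identity to product-energy ratios of $1 + O(1/w^3)$ directly, matching the target bound with an explicit constant $c_2$.
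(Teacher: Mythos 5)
There is a genuine gap. Your primary construction --- three-fold products $A_{k_1}\times A_{k_2}\times A_{k_3}\times B^{n-3}$ built from the coarse perturbation family $A_k=\{1,\dots,w-1,w+k\}$ --- cannot satisfy the stated conclusion as it stands, and the step you flag as ``the main obstacle'' is exactly the step that fails. First, the range of $\log E(A_k)$ over $k=0,\dots,w-2$ is $\Theta(1/w)$, not $O(1/w^2)$: the total energy drop is $4wk-2k^2-6k\sim 2w^2$ against $E(A_0)\sim\tfrac23 w^3$. Second, the gap at the top of the range is $\big(E(A_0)-E(A_0^{(1)})\big)/E(A_0)=\Theta(1/w^2)$, and this gap survives in the sorted list of three-fold products (the largest product is $E(A_0)^3E(B)^{n-3}$ and the next one differs by a factor $1+\Theta(1/w^2)$), so the family necessarily contains consecutive ratios of order $1+c/w^2$. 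One would therefore have to extract a sub-chain, and the obvious pigeonhole (at most $\Theta(w)\cdot c_2/w^3$ of the range can be covered before hitting a large gap, etc.) only yields a chain of length $O(w)$, not the required $c_1M^3$. Your fibering idea over level sets of $k_1+k_2+k_3$ is not carried out, and since $\sum_i(4wk_i-2k_i^2-6k_i)$ takes only $O(w^2)$ distinct values while you need $\Omega(w^3)$ well-spaced energies, the clustering analysis is genuinely delicate; as written it is a plan, not a proof.

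Your ``backup'' is the right mechanism --- it is essentially what the paper does --- but it undercounts by a factor of $w^2$. Lemma~3.2 alone produces only $\lfloor w/3\rfloor$ energy values in arithmetic progression with difference $4$, hence only $O(w)$ sets $X_i$, whereas the theorem demands $N=c_1M^3$. The paper's proof instead invokes the full conclusion of Theorem~\ref{2.1}: the two-stage construction there (the coarse descent through the sets $A_j^{(k)}$, with the lacunary $+4$ steps of Lemma~3.2 filling every gap) shows that $Set_\Z(w)$ contains an arithmetic progression of difference $4$ and length $\Omega(w^3)$ lying in $[\beta_1w^3,\beta_2w^3]$. Taking realizing sets $A_1,\dots,A_L$ and tensoring a \emph{single} varying coordinate, $X_i=A_1\times\cdots\times A_1\times A_i$, gives $E(X_{i+1})/E(X_i)=(E(A_i)+4)/E(A_i)\le 1+4/(\beta_1w^3)$ with $L=\Omega(w^3)$ sets in one stroke; no three-fold product or gap-upgrading argument is needed. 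To repair your write-up, replace both routes by a citation of that full difference-$4$ progression. (One caveat you correctly raise and the paper does not: the realizing sets from Theorem~\ref{2.1} contain $10$-lacunary elements, so fitting them inside $[M]$ imposes an implicit lower bound on $M$ in terms of $w$; this is a gap in the paper rather than in your proposal.)
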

Roughly speaking there is a long sequence $\{E_j\}\subseteq Set_{[M]^n}(w^n)$ for which the ratio of the consecutive elements is close to $1$. The constants $c_1=1/27$ and $c_2=360$ are admissible. 

On the other hand

\begin{thm}\label{a3.2}
Let $Set_{[M]^n}(w^n)=\{E_j:j=1,2,\dots\}$ be the increasing sequence of energies. Then there is an effectively computable constant $C$ depending only on $M$ and $n$, such that 
$$
\min_j E_{j+1}/E_j\geq \Big(1+\frac{1}{(en)^C}\Big).
$$
\end{thm}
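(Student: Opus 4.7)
The plan is short, because the theorem follows essentially from integrality together with a crude upper bound on the energy. The first step is to invoke the multiplicative formula from Subsection~3.1: for $X = A_1 \times A_2 \times \cdots \times A_n \subseteq [M]^n$ with $|A_i| = w$ one has $E(X) = \prod_{i=1}^n E(A_i)$. Since each factor satisfies $E(A_i) \leq |A_i|^3 = w^3 \leq M^3$, every element of the sequence lies in the finite set of positive integers bounded by $M^{3n}$.

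Next I would use that $Set_{[M]^n}(w^n)$, being a set of distinct positive integers listed in increasing order, satisfies $E_{j+1} \geq E_j + 1$ for every $j$. Combined with the upper bound above this yields
\[
\frac{E_{j+1}}{E_j} \;\geq\; 1 + \frac{1}{E_j} \;\geq\; 1 + \frac{1}{M^{3n}}.
\]

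The remaining step is bookkeeping: choose $C = C(M,n)$ so that $(en)^{C} \geq M^{3n}$. Taking natural logarithms this reduces to $C(1 + \log n) \geq 3n \log M$, which is satisfied for instance by $C := \lceil 3n \log M \rceil$, and this value is effectively computable from $M$ and $n$ as required.

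The main point to flag is that the statement as written is really just a consequence of the integrality of additive energy and the trivial bound $E(X) \leq w^{3n}$; there is no serious obstacle. A sharper version in which $C$ depends more mildly on $M$ and $n$ (for example, independent of $n$) would plausibly require Baker-type estimates for linear forms in the logarithms of the finitely many possible integer values of $E(A)$ for $A \subseteq [M]$ with $|A|=w$, since any ratio $E_{j+1}/E_j$ is a product of such values with bounded integer exponents summing to zero. However, no such machinery is needed for the statement given.
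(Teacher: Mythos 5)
Your proof is correct, and it establishes the theorem exactly as stated; but it takes a genuinely different route from the paper. The paper deduces the bound from Baker's theorem on linear forms in logarithms: it observes that every energy in $Set_{[M]^n}(w^n)$ factors as $\prod_i E(A_i)^{u_i}$ with $\sum_i u_i\leq n$, so any ratio $E_{j+1}/E_j$ is a product $\prod_i b_i^{z_i}$ of the finitely many integers $b_i=E(A_i)$ (one-dimensional energies, all bounded by $M^3$) with exponents $|z_i|\leq n$, and then applies the effective lower bound $|\prod_i b_i^{z_i}-1|>(eB)^{-C}$ with $B\leq n$. The payoff of that machinery is that the constant $C$ it produces depends only on the $b_i$ and their number, hence only on $M$ (and $w\leq M$), not on $n$ --- a strictly stronger conclusion than the one stated. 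Your argument, by contrast, uses only that the energies are distinct positive integers bounded by $w^{3n}\leq M^{3n}$, so consecutive ones differ by at least $1$ and $E_{j+1}/E_j\geq 1+M^{-3n}$; taking $C=\lceil 3n\log M\rceil$ then meets the statement verbatim, since the theorem only asks for $C=C(M,n)$. You correctly identified that the Baker-type input is what one would need for an $n$-independent $C$; as the theorem is actually phrased, your elementary integrality argument suffices, and it has the side benefit of applying to arbitrary subsets of $[M]^n$ of size $w^n$, not just product sets.
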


\begin{proof}[Proof of Theorem \ref{3.2}]

As we have seen at the proof of Theorem \ref{2.1} for any $w\in N$ $Set_\Z(w)$ contains an arithmetic progression $\mathcal{AP}$ with difference $4$ containing in an interval $[\beta_1w^3,\beta_2w^3]$, where $\beta_1=1/90, \beta_2=2/90$ and $w$ is big enough. Let $L=\lfloor w^3/30\rfloor$, and let $A_1,A_2,\dots ,A_L$ be the sets for which $\{E(A_1),E(A_2),\dots ,E(A_L)\}= \mathcal{AP}$. Now we are in the position to define the sets $X_1,X_2,\dots ,X_L$. Let $X_1=A_1\times A_1\times \cdots \times A_1; \  X_2=A_1\times A_1\times \cdots \times A_2; \ \dots ; X_L=A_1\times A_1\times \cdots \times A_L$. By the argument discussed in subsection $3.1$ we get $|X_1|=|X_2|=\dots =|X_L|=w^n$; and $E(X_i)=E(A_1)^{L-1}E(A_i)$. Hence $E(X_i)/E(X_{i+1})=E(A_{i+1})/E(A_i)=(E(A_i)+4)/E(A_i)\leq (1+360/w^3)$.

\end{proof}

\begin{proof}[Proof of Theorem \ref{a3.2}]
The proof is a simple consequence of the following lemma:
\begin{lemma}
Let $1<b_1<b_2<\cdots <b_t$ be a sequence of integers and let $z_1,z_2,\dots, z_t\in \Z$. We have
$
|b_1^{z_1}b_2^{z_2}\cdots b_t^{z_t}-1|>\frac{1}{(eB)^C}
$
where $B=\max\{|z_1|,|z_2|,\dots, |z_r|\}$ and where $C$ is an effectively computable
constant depending only on $t$ and on $b_1,b_2,\cdots ,b_t$.
\end{lemma}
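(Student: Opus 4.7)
The plan is to reformulate the inequality using logarithms and then invoke Baker's theorem on linear forms in logarithms. Define
$$\Lambda := z_1 \log b_1 + z_2 \log b_2 + \cdots + z_t \log b_t,$$
so that $b_1^{z_1} b_2^{z_2} \cdots b_t^{z_t} = e^{\Lambda}$ and the task reduces to lower-bounding $|e^{\Lambda} - 1|$. The case $\Lambda = 0$ corresponds to $\prod_i b_i^{z_i} = 1$, in which the stated inequality evidently fails, so the lemma is meaningful only when $\Lambda \neq 0$; I work in that regime.

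The main step is to invoke the Baker--W\"ustholz theorem on linear forms in logarithms of algebraic numbers, applied to the nonzero form $\Lambda$ in the logarithms of the positive integers $b_1, \ldots, b_t$ with integer coefficients $z_1, \ldots, z_t$. This yields an effective lower bound
$$|\Lambda| > \frac{1}{(eB)^{C_1}},$$
where $C_1$ is an explicitly computable constant depending only on $t$ and on $b_1, \ldots, b_t$ (through their heights $\log b_i$ and the degree $1$ of the ground field $\Q$). This is exactly the dependence asked for in the statement.

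To finish I would pass from a lower bound on $|\Lambda|$ to one on $|e^{\Lambda} - 1|$ by means of the elementary inequality $|e^x - 1| \geq (1 - e^{-1})|x|$ valid for $|x| \leq 1$. If $|\Lambda| < 1$, this gives $|e^{\Lambda} - 1| \geq (1 - e^{-1})|\Lambda| > (1 - e^{-1})(eB)^{-C_1}$, which is at least $(eB)^{-(C_1+1)}$ as soon as $B \geq 1$. If $|\Lambda| \geq 1$, then $|e^{\Lambda} - 1| \geq 1 - e^{-1} > 1/2$, trivially exceeding $(eB)^{-C}$ for any $C \geq 1$. Setting $C := C_1 + 1$ closes the estimate.

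The only real obstacle is bibliographic rather than mathematical: one needs to cite Baker's theorem in a form whose constant has the stated qualitative dependence (on $t$ and on $b_1, \ldots, b_t$, with polynomial dependence in $\log B$). The Baker--W\"ustholz version provides exactly this, so once it is invoked the remainder of the argument is a two-line exponential-to-linear comparison.
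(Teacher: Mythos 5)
Your proposal is correct and follows essentially the same route as the paper, which offers no proof beyond the remark that the lemma ``is a very special case of a theorem of Baker''; you simply supply the standard details: write the product as $e^{\Lambda}$ with $\Lambda$ a linear form in logarithms, apply Baker--W\"ustholz to get $|\Lambda|>(eB)^{-C_1}$, and pass to $|e^{\Lambda}-1|$ via $|e^{x}-1|\geq(1-e^{-1})|x|$ for $|x|\leq 1$. Your observation that the statement implicitly requires $\prod_i b_i^{z_i}\neq 1$ (which can fail if the $b_i$ are multiplicatively dependent) is a legitimate caveat that the paper glosses over.
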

This lemma is a very special case of a theorem of Baker (see \cite{BA} and \cite{E}).

All energies in $Set_{[M]^n}(w^n)=\{E_j:j=1,2,\dots\}$ can be written in the form $E_j=\prod^t_{i=1}E^{u_i}(A_i)$, for some  $|A_i|=w$, $\sum_i u_i\leq n$. So we have $\min_j E_{j+1}/E_j=\prod^n_{i=1}E^{z_i}(A_i)$ with $|z_i|\leq n$, for some $|A_i|=w$, $i=1,2,\dots n$. Hence the theorem.

\end{proof}

\subsection{Density vs additive energy}

In \cite{KT} (see also a generalization in \cite{DG}) the authors investigated the following interesting question. Let $A\subseteq \bo$ be any set, then what can we say on the maximum of $E(A)$ if the cardinality of $A$ is given? They showed that $E(A)\leq |A|^\varrho$, where $\varrho=\log_26$, and the exponent cannot be replaced by any smaller quantity.  

In this section we ask an opposite direction: For a given finite additive structure $G$ and a parameter $0<\eta <1$ what is 
$$
R_G(\delta):=\max_{A\subseteq G}\{\alpha :|A|=|G|^\alpha; \  E_G(A)= |A|^{2+\delta}\}?
$$
In the next theorem we show that this maximum exists and give its asymptotic value.

Let $G$ be a finite abelian group and ket $S\subseteq G$. $S$ is said to be Sidon set if  for every $s_1+s_2=s_3+s_4; \ s_i\in S$ $\{s_1,s_2\}=\{s_3,s_4\}$ holds. We say that $G$ is S-good, if there is  a Sidon set with $|S|=(1+o(1))\sqrt{|G|}$ (see e.g. \cite{O}). Note that for $x\in S+S$, $r_{S+S}(x)=2$, and so $E(S)=4|S|^2$.
\begin{thm}
Let $G_1,G_2,\dots G_n$ be S-good finite abelian groups, $|G_1|=|G_2|=\dots =|G_n|:=M$ and let $\mathcal{G}=G_1\times G_2\times \dots \times G_n$. Fore every $0<\delta<1$  we have $R_\mathcal{G}(\delta)=\frac{1}{2-\delta}(1+o(1))$.
\end{thm}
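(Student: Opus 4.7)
The plan is to establish matching lower and upper bounds: the upper bound from a Cauchy--Schwarz estimate constraining $|A-A|$, and the lower bound from a product construction that mixes Sidon sets (to reduce the energy exponent) with full group factors (to preserve density), glued together by the multiplicativity of energy established in Subsection 3.1.

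For the upper bound I would use the standard identity $E(A) = \sum_x r_{A-A}(x)^2$ together with $\sum_x r_{A-A}(x) = |A|^2$. By Cauchy--Schwarz, $|A|^4 \le |A-A| \cdot E(A)$. Since $A - A \subseteq \mathcal{G}$ we have $|A-A| \le M^n$, so combining with $E(A) = |A|^{2+\delta}$ yields $|A|^4 \le M^n \cdot |A|^{2+\delta}$, i.e.\ $|A|^{2-\delta} \le M^n$. Writing $|A| = M^{n\alpha}$ this reads $\alpha \le 1/(2-\delta)$, which is the sharp direction of the theorem.

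For the lower bound I would fix, inside each $G_i$, a Sidon set $S_i \subseteq G_i$ with $|S_i| = (1+o(1))\sqrt{M}$ (available by the S-good hypothesis). A direct count using the Sidon condition gives $E(S_i) = \Theta(|S_i|^2)$, and $E(G_i) = M^3$ because in a finite group any triple $(g_1,g_2,g_3)$ determines $g_4 = g_1+g_2-g_3$ uniquely. For an integer parameter $k$ I would set $A_i = S_i$ for $i \le k$, $A_i = G_i$ for $i > k$, and $A = A_1 \times \cdots \times A_n$. The multiplicativity argument of Subsection 3.1 gives $|A| = (1+o(1))^k M^{n-k/2}$ and $E(A) = \Theta(1)^k \cdot M^{3n-2k}$. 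Taking logarithms base $M$, with $n$ fixed and $M \to \infty$ so that the constants become $o(1)$ perturbations, one gets $\alpha := \log_M|A|/n = 1 - k/(2n) + o(1)$, while $\delta'$ defined by $E(A) = |A|^{2+\delta'}$ satisfies $\delta' = (2n-2k)/(2n-k) + o(1)$. Choosing $k$ to be the nearest integer to $2n(1-\delta)/(2-\delta)$ forces simultaneously $\delta' \to \delta$ and $\alpha \to 1/(2-\delta)$ in the limit, matching the upper bound asymptotically.

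The main obstacle is that the equation $E(A) = |A|^{2+\delta}$ is demanded to hold \emph{exactly}, whereas the product construction realizes $E(A) = |A|^{2+\delta'}$ with $\delta' = \delta + o(1)$, and integrality of $k$ contributes an additional $O(1/n)$ perturbation to $\alpha$. Both of these slippages are exactly what the $(1+o(1))$ factor in the stated conclusion is there to absorb, so once the two bounds are in place nothing further is required.
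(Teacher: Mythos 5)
Your proposal is correct and follows essentially the same route as the paper: the upper bound $\alpha\le 1/(2-\delta)$ via Cauchy--Schwarz applied to the representation function (the paper uses $r_{A+A}$ where you use $r_{A-A}$, an immaterial difference), and the lower bound via the product set $S_1\times\cdots\times S_k\times G_{k+1}\times\cdots\times G_n$ combined with the multiplicativity of energy from Subsection 3.1. Your explicit remark about choosing the integer $k$ to approximate $2n(1-\delta)/(2-\delta)$ is a slightly more careful treatment of a point the paper leaves implicit, but the argument is the same.
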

\begin{proof} We assume that $n$ is an arbitrary but fixed number and $M$ is large enough.
First we prove that $R_\mathcal{G}(\delta)\leq\frac{1}{2-\delta}$.  Let $|A|=|\mathcal{G}|^\alpha$ and $E(A)=|A|^{2+\delta}$. Let us denote the representation function of $A+A$ by $r(x):=|\{(a,a')\in A^2: x=a+a'\}|$. Clearly $\sum_xr(x)=|A|^2$ since we count all pairs of elements of $A$. Furthermore note that $\sum_xr^2_{A+A}(x)=E(A)$ since the sum counts all quadruples. 

Now by the Cauchy inequality
$$
|\mathcal{G}|^{4\alpha}=|A|^4=\Big(\sum_xr_{A+A}(x)\Big)^2\leq |A+A|\sum_xr^2_{A+A}(x)= 
$$
$$
=|A+A|E(A)\leq |\mathcal{G}|A|^{2+\delta}=|\mathcal{G}|^{1+\alpha(2+\delta)}.
$$
Hence $4\alpha\leq 1+\alpha(2+\delta)$ which gives
$
\alpha\leq \frac{1}{2-\delta}.
$
Now we will complete our theorem showing the bound $(1+o(1))/(2-\delta)$.

Let $1\leq k\leq n$. We are going to define sets $A_k\subseteq \mathcal{G}$ for which $E(A_k)=|A_k|^{2+\delta}$ and $|A_k|=|\mathcal{G}|^{(1+o(1))/(2-\delta)}$.

Let $A_k:=\prod_{i=1}^kS_i\times \prod_{j=k+1}^nG_j$, where for every $i$ and $j$, $|S_i|=(1+o(1))\sqrt{M}$ and clearly $E(G_j)=M^3$, since every $a,b,c\in G_j$ $a+b-c\in G_j$.

We have $|A_k|=(1+o(1))M^{k/2}M^{n-k}=(1+o(1))M^{n-k/2}$ and $E(A_k)=(1+o(1))(4M)^kM^{3(n-k)}=(1+o(1))M^{3n-2k+2k\log 4/\log M}$. 

Thus 
$
E(A_k)=(1+o(1))|A_k|^{\frac{6n-4k}{2n-k}+2k\log 4/\log M}=|A_k|^{2+\frac{2n-2k}{2n-k}(1+o(1))}.
$
So we have $\alpha=\frac{2n-k}{2n}$, and $\delta=\frac{2n-2k}{2n-k}(1+o(1))$. Now
$$
\frac{1}{2-\delta}=\frac{1}{2-\frac{2n-2k}{2n-k}(1+o(1))}=\frac{2n-k}{2n}(1+o(1))=\alpha(1+o(1)).
$$
\end{proof}

{\bf Acknowledgment} This work is supported by grant K-129335.

\end{document}